\definecolor{darkred}{rgb}{0.4,0.1,0.1}
\definecolor{darkblue}{rgb}{0.1,0.1,0.4}
\numberwithin{equation}{section}
\theoremstyle{plain}
\newtheorem{theorem}{Theorem}[section]
\newtheorem{proposition}[theorem]{Proposition}
\newtheorem{corollary}[theorem]{Corollary}
\theoremstyle{remark}
\newtheorem{remark}[theorem]{Remark}
\theoremstyle{definition}
\newcommand\cH{\mathcal H}
\DeclareMathOperator{\diver}{div}
\DeclareMathOperator{\curl}{curl}
\definecolor{darkgreen}{rgb}{0.1,0.45,0.1}
\definecolor{darkblue}{rgb}{0.1,0.1,0.4}
\definecolor{darkgrey}{rgb}{0.5,0.5,0.5}
\definecolor{darkred}{rgb}{0.6,0.0,0.0}
\DeclareMathOperator\ran{ran}
\newcommand\void[1]{}
\renewcommand{\phi}{\varphi}
\def\sb{\mathfrak b}
   \def\cH{{\mathcal H}}
\def\R{\mathbb{R}}
\def\N{\mathbb{N}}
\newcommand{\dom}{\mathrm{dom}\,}
\newcounter{counter_a}
\title[Curl curl versus Dirichlet Laplacian eigenvalues]{Curl curl versus Dirichlet Laplacian eigenvalues}
\author[J.~Rohleder]{Jonathan Rohleder}
\address{Matematiska institutionen \\ Stockholms universitet \\
106 91 Stockholm \\
Sweden}
\email{jonathan.rohleder@math.su.se}
\keywords{Curl curl operator, Maxwell equations, Laplace operator, eigenvalues}
\subjclass[2020]{35P15, 35Q61}
\begin{document}

\begin{abstract}
We provide an upper estimate for the eigenvalues of the curl curl operator on a bounded, three-dimensional Euclidean domain in terms of eigenvalues of the Dirichlet Laplacian. The result complements recent inequalities between curl curl and Neumann Laplacian eigenvalues. The curl curl eigenvalues considered here correspond to the Maxwell eigenvalue problem with constant material parameters.
\end{abstract}

\maketitle

\section{Introduction}

In this note we are interested in the eigenvalue problem
\begin{align}\label{eq:curlCurl}
 \begin{cases} 
  \curl \curl u & \hspace{-2.5mm} = \alpha u \quad \,\, \text{in}~\Omega, \\
  \hfill \diver u & \hspace{-2.5mm} = 0 \quad \quad \text{in}~\Omega, \\
  \hfill u \times \nu & \hspace{-2.5mm} = 0 \quad \quad \text{on}~\partial \Omega,
 \end{cases}
\end{align}
on a bounded, connected Lipschitz domain $\Omega \subset \R^3$ with boundary $\partial \Omega$; $\nu$ denotes the exterior unit normal vector on $\partial \Omega$. This problem has been studied widely since it stems from the time-harmonic Maxwell equations in electromagnetism, see, e.g.,~\cite{LZ21}.

Let us denote by 
\begin{align*}
 \alpha_1 \leq \alpha_2 \leq \dots
\end{align*}
the eigenvalues of the problem \eqref{eq:curlCurl}, counted with multiplicities. We point out that all eigenvalues are non-negative and that an eigenvalue zero may occur; however, if $\Omega$ has no holes, i.e.\ $\partial \Omega$ is connected, then $\alpha_1 > 0$; cf.\ \cite[Chapter I, Lemma 3.4]{GR}. In recent years there has been an increasing interest in comparisons between the eigenvalues $\alpha_j$ and the eigenvalues of other equations on $\Omega$, such as the Stokes equation with Dirichlet boundary conditions or the Laplacian with Neumann boundary conditions \cite{P15,ZZ18,Z18}. For instance, denoting by $\mu_2$ the first non-zero eigenvalue of the Neumann Laplacian, Pauly \cite{P15} proved
\begin{align}\label{eq:Zhang}
 \mu_2 \leq \alpha_1
\end{align}
if $\Omega$ is convex. Zhang \cite{Z18} showed that
\begin{align}\label{eq:curlStokes}
 \alpha_j < \gamma_j, \quad j \in \N,
\end{align}
holds for the eigenvalues $\gamma_1 \leq \gamma_2 \leq \dots$ of the Stokes operator with Dirichlet boundary conditions if $\Omega$ is simply connected. 

In the present paper we compare the eigenvalues $\alpha_j$ with the eigenvalues of the Dirichlet Laplacian $- \Delta_{\rm D}$, denoted by
\begin{align*}
 0 < \lambda_1 < \lambda_2 \leq \lambda_3 \leq \dots,
\end{align*}
again counted according to their multiplicities. To the best of our knowledge, not much is known. Zhang \cite{Z18} shows $\lambda_1 < \gamma_1$ on $C^1$-domains, which does not allow to derive any conclusion about e.g.\ $\alpha_1$ when comparing with \eqref{eq:curlStokes}. However, in this note we prove the following result.

\begin{theorem}\label{thm:intro}
On any bounded, connected Lipschitz domain $\Omega \subset \R^3$, $\alpha_3 < \lambda_1$ and
\begin{align}\label{eq:resultIntro}
 \alpha_{2 k + 1} \leq \lambda_k, \quad k \in \N,
\end{align}
hold. If, in addition, $\Omega$ is a polyhedron, then the inequality \eqref{eq:resultIntro} is strict for all $k \in \N$.
\end{theorem}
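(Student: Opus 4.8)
The plan is to produce, for each eigenfunction of the Dirichlet Laplacian, a sufficiently large family of admissible test functions for the curl curl problem and then invoke the min-max principle. Recall that the curl curl eigenvalues admit the variational characterization
\begin{align*}
 \alpha_j = \min_{\substack{V \subseteq \cH \\ \dim V = j}} \max_{0 \neq u \in V} \frac{\|\curl u\|_{L^2(\Omega)}^2}{\|u\|_{L^2(\Omega)}^2},
\end{align*}
where $\cH$ is the space of $u \in H_0(\curl;\Omega)$ with $\diver u = 0$ (the divergence-free fields with vanishing tangential trace); here the Stokes/Friedrichs-type decomposition guarantees that the form domain splits off the gradient part cleanly on a Lipschitz domain. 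The idea is that a scalar Dirichlet eigenfunction $\psi$ with $-\Delta \psi = \lambda \psi$ gives rise, via $\grad \psi$ (which has vanishing tangential trace since $\psi\restb = 0$) to a field in $H_0(\curl;\Omega)$ satisfying $\curl \grad \psi = 0$; but $\grad \psi$ is not divergence-free. To fix this, one takes instead the \emph{curl} of an appropriate vector potential, or works with fields of the form $u = \curl(\psi\, a)$ for constant vectors $a \in \R^3$: such a field lies in $\cH$ automatically, and a direct computation gives $\curl u = \curl\curl(\psi a) = \grad(\diver(\psi a)) - \Delta(\psi a) = \grad(a\cdot\grad\psi) - (\Delta\psi)\,a = \grad(a\cdot\grad\psi) + \lambda\,\psi a$. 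The factor of $2k+1$ rather than $k$ (and the gain of a genuine factor, i.e.\ $\alpha_3 < \lambda_1$) will come from combining such fields built from the first $k$ Dirichlet eigenfunctions with the three choices of $a$ from a basis of $\R^3$, yielding a $3k$-dimensional trial space, and then cutting it down by the (at most $k$) linear constraints that force divergence-freeness or that kill the troublesome gradient term, leaving dimension at least $2k+1$.

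Concretely, the key steps I would carry out are: (1) set $X_k = \spann\{\,\curl(\psi_i\, e_\ell) : 1 \le i \le k,\ \ell = 1,2,3\,\}$, where $\psi_1,\dots,\psi_k$ are the first $k$ Dirichlet eigenfunctions and $e_1,e_2,e_3$ the standard basis; verify $X_k \subseteq \cH$, i.e.\ each such field is divergence-free (immediate, as a curl) and has vanishing tangential trace on $\partial\Omega$ (because $\psi_i\restb = 0$, so $\psi_i e_\ell \times \nu = 0$, hence the tangential trace of the curl also vanishes in the appropriate weak sense on a Lipschitz boundary); (2) show $\dim X_k \ge 2k+1$ — here one must check that the map $(c_{i\ell}) \mapsto \curl(\sum c_{i\ell}\psi_i e_\ell) = \sum_\ell \grad(\sum_i c_{i\ell}\partial_\ell\psi_i) + \sum_i \lambda_i(\sum_\ell c_{i\ell} e_\ell)\psi_i$ has kernel of dimension at most $k - 1$, using linear independence of the $\psi_i$ (which have distinct or grouped eigenvalues) together with the fact that a nonzero gradient $\grad f$ with $f\in H^1_0$ cannot vanish unless $f$ is constant hence zero; (3) on $X_k$, estimate the Rayleigh quotient: for $u = \curl(\sum c_{i\ell}\psi_i e_\ell)$ one has $\curl u = \curl\curl(\sum c_{i\ell}\psi_i e_\ell)$, and one computes $\|\curl u\|^2 = \sum_{i,\ell} \lambda_i c_{i\ell}\,\langle \curl\curl(\psi_i e_\ell), \psi e_\ell\rangle$-type expressions; integrating by parts and using orthogonality of the $\psi_i$ in $L^2$ and the relation $\curl\curl(\psi a) = \grad(a\cdot\grad\psi) + \lambda\psi a$, one bounds $\|\curl u\|^2_{L^2}/\|u\|^2_{L^2} \le \max_i \lambda_i = \lambda_k$, with strict inequality whenever the gradient parts are genuinely present; (4) apply min-max with the $(2k+1)$-dimensional (or larger) subspace $X_k \cap (\text{constraint})$ to conclude $\alpha_{2k+1} \le \lambda_k$, and separately extract $\alpha_3 < \lambda_1$ from the case $k=1$ where $\dim X_1 = 3$ and strictness is automatic because $\grad\psi_1 \not\equiv \text{const}$.

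The main obstacle — and where the polyhedron hypothesis enters — is upgrading the non-strict inequality $\alpha_{2k+1}\le\lambda_k$ to a strict one. The estimate in step (3) is an equality only if, roughly, every field in the optimizing subspace is a pure gradient plus a pure multiple of $\psi_i e_\ell$ with no cross terms, i.e.\ only in a degenerate situation; to rule this out in general one needs to know that $\curl u$ genuinely mixes contributions from different eigenspaces, and a clean way to force this is a unique-continuation / boundary-regularity argument showing that a curl curl eigenfunction cannot coincide on $\Omega$ with a field built purely from Dirichlet eigenfunctions unless it vanishes. On a polyhedron the extra elliptic regularity of $\psi_k$ near the (flat) faces — in particular $\partial_\nu\psi_k$ being well-defined and not identically zero on each face, combined with the corner/edge structure — lets one show that the trial fields are never themselves curl curl eigenfunctions, forcing the Rayleigh quotient to be strictly below $\lambda_k$; this is the step I expect to require the most care, as it hinges on a careful analysis of the traces of $\grad\psi_k$ on $\partial\Omega$ and on excluding an overdetermined eigenvalue equation. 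I would handle the generic Lipschitz case first with the soft argument giving $\le$ (and the strict $\alpha_3<\lambda_1$ from $k=1$), then add a final paragraph specializing to polyhedra for the strict inequality for all $k$.
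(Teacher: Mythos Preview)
Your trial space is not contained in the form domain of the curl\,curl operator, and this breaks the argument at step~(1). For $\psi\in H_0^1(\Omega)$ and $a\in\R^3$ constant, one has $\curl(\psi a)=\nabla\psi\times a$. On $\partial\Omega$, since $\psi$ vanishes there, $\nabla\psi$ is purely normal, $\nabla\psi=(\partial_\nu\psi)\,\nu$, and therefore
\[
\bigl(\curl(\psi a)\bigr)\times\nu\big|_{\partial\Omega}
=(\partial_\nu\psi)\,(\nu\times a)\times\nu
=(\partial_\nu\psi)\,\bigl(a-(a\cdot\nu)\,\nu\bigr),
\]
which is the tangential component of $a$ multiplied by $\partial_\nu\psi$. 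This is generically nonzero (indeed $\partial_\nu\psi_1\not\equiv 0$ by Hopf), so $\curl(\psi_i e_\ell)\notin H_0(\curl;\Omega)$. What \emph{does} vanish is the normal trace $\nu\cdot\curl(\psi a)$; you have conflated the two. The inference ``$w\times\nu=0$ implies $(\curl w)\times\nu=0$'' is false; the correct implication is $w\times\nu=0\Rightarrow\nu\cdot\curl w=0$. Thus your fields are admissible for the \emph{other} Maxwell boundary condition ($\nu\cdot u=0$, $\nu\times\curl u=0$), not for the problem \eqref{eq:curlCurl}. There is also a regularity obstruction: on a mere Lipschitz domain Dirichlet eigenfunctions need not lie in $H^2(\Omega)$, so $\curl\curl(\psi_i e_\ell)=\nabla(\partial_\ell\psi_i)+\lambda_i\psi_i e_\ell$ need not be in $L^2$, and your trial vectors may fail even to lie in $H(\curl;\Omega)$.

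The paper circumvents both issues by \emph{not} working directly in $H_0(\curl)\cap H(\diver 0)$. It introduces the auxiliary bilinear form $\sb(u,v)=\int_\Omega(\diver u\,\diver v+\curl u\cdot\curl v)$ on the larger space $H(\diver;\Omega)\cap H_0(\curl;\Omega)$ and uses the fact (Proposition~\ref{prop:spectrum}) that the spectrum of the associated operator $B$ is precisely the \emph{union} of the Dirichlet Laplacian spectrum and the curl\,curl spectrum. The trial functions are then the $3k$ fields $\phi_j e_\ell$ themselves---not their curls---which lie in $H_0^1(\Omega)^3\subset\dom\sb$ with no divergence-free constraint and no $H^2$-regularity required. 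An integration by parts gives $\sb(u,u)\le\lambda_k\|u\|^2$ on this space; augmenting by $\ker(B-\lambda_k)$ (shown to meet $H_0^1(\Omega)^3$ trivially via unique continuation) and counting how many of the resulting $\eta_j\le\lambda_k$ can be Dirichlet eigenvalues yields $\alpha_{2k+1}\le\lambda_k$. Strictness on polyhedra comes from showing $H_0^1(\Omega)^3\cap\ker(B-\lambda_k)=\{0\}$ using that $\diver u\in H_0^1(\Omega)$ for eigenfunctions of $B$ together with the presence of three flat boundary pieces with independent normals. Your instinct to build a $3k$-dimensional space from $\phi_j e_\ell$ and lose at most $k-1$ dimensions is exactly right; the missing idea is to run the min--max for the combined operator $B$ rather than for $C$ alone.
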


Theorem \ref{thm:intro} is a consequence of Theorem \ref{thm:main} below, where slightly more general conditions for strictness of the inequality \eqref{eq:resultIntro} are provided. The question whether the inequality \eqref{eq:resultIntro} is always strict remains open. We point out that the inequality $\alpha_3 < \lambda_1$ is a significant improvement of Zhang's result $\alpha_1 < \gamma_1$, since $\lambda_1 < \gamma_1$.

Our proof of Theorem \ref{thm:intro} relies on a variational principle for the union of the Dirichlet Laplacian eigenvalues and the eigenvalues of the curl curl operator associated with the problem \eqref{eq:curlCurl}. This variational principle corresponds to a bilinear form studied earlier by Costabel and Dauge \cite{CD99}, see also Costabel \cite{C91}. However, it seems it has not been used before for the purpose of deriving eigenvalue inequalities of this kind.

Finally, we would like to point out that, in view of \eqref{eq:Zhang}, Theorem \ref{thm:intro} might provide a tool towards proving $\mu_{k + 3} \leq \lambda_k$, $k \in \N$, where $0 = \mu_1 < \mu_2 \leq \dots$ are the eigenvalues of the Neumann Laplacian. This is known to hold for convex $\Omega \subset \R^3$ \cite{LW86} and conjectured to hold for any bounded Lipschitz domain, see, e.g., \cite[Conjecture 3.2.42]{LMP}. The two-dimensional variant of this conjecture, $\mu_{k+2} \leq \lambda_k$ for all $k$, was recently established for all simply connected planar domains by the author \cite{R23p} using an approach similar to the one taken here.

This note is organized as follows. In Section 2 we review the rigorous definition of the eigenvalue problem \eqref{eq:curlCurl} and some properties of the involved function spaces and provide the necessary variational principles. Section 3 then contains the proof of the main result.

\section{Operator formulation and variational principles}\label{sec:prel}

In this section we collect all necessary tools for the proof of our main result. Throughout the whole paper we assume that $\Omega \subset \R^3$ is a bounded, connected Lipschitz domain. We define the spaces
\begin{align*}
 H (\diver; \Omega) = \left\{ u \in L^2 (\Omega)^3 : \diver u \in L^2 (\Omega) \right\}
\end{align*}
and
\begin{align*}
 H (\curl; \Omega) = \left\{ u \in L^2 (\Omega)^3 : \curl u \in L^2 (\Omega)^3 \right\},
\end{align*}
which are Hilbert spaces equipped with their natural norms. Next we recall that the mapping 
\begin{align*}
 C^\infty (\overline \Omega)^3 \ni u \mapsto u |_{\partial \Omega} \times \nu
\end{align*}
extends by continuity to a bounded linear operator from $H (\curl; \Omega)$ into $H^{- 1/2} (\partial \Omega)^3$; denoting the action of this operator by $u |_{\partial \Omega} \times \nu$ for general $u \in H (\curl; \Omega)$, the integration-by-parts formula
\begin{align}
\label{eq:PI}
 \int_\Omega \curl u \cdot v - \int_\Omega u \cdot \curl v = \left( u |_{\partial \Omega} \times \nu, v |_{\partial \Omega} \right)_{\partial \Omega}, \quad u \in H (\curl; \Omega), v \in H^1 (\Omega)^3,
\end{align}
holds, where $(\cdot, \cdot)_{\partial \Omega}$ is the duality between $H^{- 1/2} (\partial \Omega)^3$ and $H^{1/2} (\partial \Omega)^3$ and $v |_{\partial \Omega}$ denotes the componentwise trace of $v$, see \cite[Chapter I, Theorem 2.11]{GR}. We define
\begin{align*}
 H_0 (\curl; \Omega) := \left\{ u \in H (\curl; \Omega) : u |_{\partial \Omega} \times \nu = 0 \right\},
\end{align*}
the space of vector fields in $H (\curl; \Omega)$ which are normal on the boundary. For later use we note that
\begin{align}\label{eq:bdr0}
 \nabla H_0^1 (\Omega) := \left\{ \nabla \phi : \phi \in H_0^1 (\Omega) \right\} \subset H_0 (\curl; \Omega),
\end{align}
where $H_0^1 (\Omega)$ is the kernel of the trace operator on $H^1 (\Omega)$; this inclusion follows from \eqref{eq:PI} and another integration by parts.

We will also make use of the space
\begin{align*}
 H (\diver 0; \Omega) = \left\{ u \in L^2 (\Omega)^3 : \diver u = 0 \right\}.
\end{align*}
In fact, the orthogonal decomposition
\begin{align}\label{eq:orthoDec}
 L^2 (\Omega)^3 = \nabla H_0^1 (\Omega) \oplus H (\diver 0; \Omega)
\end{align}
holds. Indeed, it follows from Poincar\'e's inequality on $H_0^1 (\Omega)$ that $\nabla H_0^1 (\Omega)$ is a closed subspace of $L^2 (\Omega)^3$, and its orthogonal complement can be computed easily via Green's first identity.

Next let us recall the weak formulations of the eigenvalue problems we are interested in. We will make use of the following standard result \cite[Sections 4.4--4.5]{D95}.

\begin{proposition}\label{prop:abstractWeak}
Let $\cH$ be a real, infinite-dimensional Hilbert space with inner product $(\cdot, \cdot)$ and corresponding norm $\| \cdot \|$, let $V$ be a Hilbert space densely and compactly embedded into $\cH$, and let $b : V \times V \to \R$ be a symmetric bilinear form which is non-negative, i.e.\ 
\begin{align*}
 b (u, u) \geq 0, \quad u \in V.
\end{align*}
Assume, moreover, that $V$ equipped with the norm defined by
\begin{align*}
 \|u\|_b^2 := \|u\|^2 + b (u, u), \quad u \in V,
\end{align*}
is complete. Then the weak eigenvalue problem 
\begin{align}\label{eq:weakEV}
 b (u, v) = \lambda (u, v), \quad v \in V,
\end{align}
where $u \in V, u \neq 0$, has an infinite sequence of non-negative eigenvalues $\lambda_1 \leq \lambda_2 \leq \dots$ and corresponding eigenvectors $u_1, u_2, \dots$ which form an orthonormal basis of $\cH$. Moreover, the eigenvalues are given by
\begin{align*}
 \lambda_k = \min_{\substack{U \subset V \\ \dim U = k}} \, \max_{\substack{u \in U \\ u \neq 0}} \, \frac{b (u, u)}{\|u\|^2}
\end{align*}
for all $k \in \N$. Furthermore, there exists a unique self-adjoint operator $B$ in $\cH$ with domain $\dom B$ related to $b$ via $\dom B \subset V$ and
\begin{align*}
 (B u, v) = b (u, v), \quad u \in \dom B, v \in V.
\end{align*}
The eigenvalues $\lambda$ and eigenvectors $u$ of $B$ are precisely the solutions of \eqref{eq:weakEV}.
\end{proposition}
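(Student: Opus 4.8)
The plan is to follow the classical form-method approach: construct a compact, self-adjoint solution operator on $\cH$ and then read off every assertion from the spectral theorem for such operators. First I would observe that, by hypothesis, the bilinear form
\[
 a(u,v) := (u,v) + b(u,v), \quad u, v \in V,
\]
is a symmetric, positive-definite inner product on $V$ whose associated norm is exactly $\|\cdot\|_b$, so that $(V, a)$ is a Hilbert space. For fixed $f \in \cH$ the functional $v \mapsto (f,v)$ is bounded on $(V, \|\cdot\|_b)$, since $\|v\| \le \|v\|_b$; hence the Riesz representation theorem in $(V,a)$ yields a unique element $Tf \in V$ with
\[
 a(Tf, v) = (f,v), \quad v \in V,
\]
which defines a linear solution operator $T : \cH \to V \hookrightarrow \cH$.

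Next I would establish the key properties of $T$ as an operator on $\cH$. Taking $v = Tf$ gives $\|Tf\|_b^2 = (f, Tf) \le \|f\|\,\|Tf\|_b$, so $\|Tf\|_b \le \|f\|$ and in particular $T$ is bounded on $\cH$ with $\|T\| \le 1$. Symmetry of $a$ yields $(Tf, g) = a(Tf, Tg) = (f, Tg)$ for all $f, g \in \cH$, so $T$ is self-adjoint; moreover $(Tf,f) = \|Tf\|_b^2 \ge 0$, so $T \ge 0$, and $Tf = 0$ forces $(f,v) = 0$ for all $v \in V$, whence $f = 0$ by density of $V$ in $\cH$, so $T$ is injective. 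Finally, $T$ maps $\cH$ boundedly into $(V, \|\cdot\|_b)$ and the embedding $V \hookrightarrow \cH$ is compact by hypothesis, so $T$ is a compact operator on $\cH$. The spectral theorem for compact, self-adjoint operators then furnishes an orthonormal basis $(u_j)_{j \in \N}$ of $\cH$ consisting of eigenvectors of $T$, with real eigenvalues $\kappa_j \to 0$; injectivity of $T$ excludes the eigenvalue $0$ (so the basis is genuinely infinite, since $\dim \cH = \infty$), and $0 < \kappa_j \le 1$ by $\|T\| \le 1$.

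It then remains to translate these facts into the language of the weak problem. Ordering so that $\kappa_1 \ge \kappa_2 \ge \cdots$ and setting $\lambda_j := \kappa_j^{-1} - 1 \ge 0$, the relation $a(Tf,v) = (f,v)$ with $f = u_j$ (so that $Tu_j = \kappa_j u_j$) gives $a(u_j, v) = \kappa_j^{-1}(u_j, v)$, i.e.\ $b(u_j, v) = \lambda_j (u_j, v)$ for all $v \in V$; hence each $(\lambda_j, u_j)$ solves \eqref{eq:weakEV}, and $\lambda_j \to \infty$. For the associated self-adjoint operator I would set $B := T^{-1} - I$ on $\dom B := \ran T$; since $T$ is bounded, self-adjoint and injective with dense range, $T^{-1}$ is self-adjoint, hence so is $B$, and a direct computation with $u = Tf$ shows $(Bu, v) = (f - Tf, v) = a(u,v) - (u,v) = b(u,v)$ for all $v \in V$, which also yields uniqueness of $B$ by density of $V$. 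The min-max formula then follows by applying the Courant--Fischer characterization of the eigenvalues of the compact operator $T$ and transporting it through the decreasing map $\kappa \mapsto \kappa^{-1} - 1$; equivalently, one expands $u \in V$ in the basis $(u_j)$ and evaluates the Rayleigh quotient $b(u,u)/\|u\|^2$ directly.

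I expect the most delicate point to be the rigorous handling of the min-max formula, since $b$ is defined only on $V$ rather than on all of $\cH$: one must justify the expansion $b(u,u) = \sum_j \lambda_j (u, u_j)^2$ for $u \in V$ and verify that the competing subspaces $U$ in the minimization may be taken inside $V$. Establishing the self-adjointness of the unbounded inverse $T^{-1}$ is a second technical ingredient. Both steps are standard and are precisely the ones carried out in \cite[Sections 4.4--4.5]{D95}.
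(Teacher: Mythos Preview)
The paper does not actually prove this proposition: it is stated as a standard result and referenced to \cite[Sections 4.4--4.5]{D95} without proof. Your proposal is correct and is precisely the classical form-method argument one finds in that reference, so there is nothing to compare.
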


The eigenvalue problem \eqref{eq:curlCurl} corresponds to the choice $\cH = H (\diver 0; \Omega)$, $V = H_0 (\curl; \Omega) \cap H (\diver 0; \Omega)$ and
\begin{align*}
 b (u, v) = \int_\Omega \curl u \cdot \curl v, \quad u, v \in H_0 (\curl; \Omega) \cap H (\diver 0; \Omega),
\end{align*}
in Proposition \ref{prop:abstractWeak}. Therefore the weak formulation of \eqref{eq:curlCurl} is the following: a non-trivial $u \in H_0 (\curl; \Omega) \cap H (\diver 0; \Omega)$ is an eigenfunction of \eqref{eq:curlCurl} corresponding to the eigenvalue $\alpha$ if and only if
\begin{align}\label{eq:curlCurlWeak}
 \int_\Omega \curl u \cdot \curl v = \alpha \int_\Omega u \cdot v, \quad v \in H_0 (\curl; \Omega) \cap H (\diver 0; \Omega).
\end{align}
The corresponding strong formulation is the eigenvalue problem for the self-adjoint operator $C$ in $H (\diver 0; \Omega)$ given by
\begin{align}\label{eq:C}
\begin{split}
 C u & = \curl \curl u, \\
 \dom C & = \left\{ u \in H_0 (\curl; \Omega) \cap H (\diver 0; \Omega) : \curl \curl u \in L^2 (\Omega)^3 \right\}.
\end{split}
\end{align}
In other words, $u \in H_0 (\curl; \Omega) \cap H (\diver 0; \Omega)$ belongs to $\ker (C - \lambda)$ if and only if \eqref{eq:curlCurlWeak} holds.

Secondly, the Laplace eigenvalue problem with Dirichlet boundary conditions corresponds to the choices $\cH = L^2 (\Omega)$, $V = H_0^1 (\Omega)$ and
\begin{align*}
 b (\phi, \psi) = \int_\Omega \nabla \phi \cdot \nabla \psi, \quad \phi, \psi \in H_0^1 (\Omega),
\end{align*}
in Proposition \ref{prop:abstractWeak}. Its eigenvalues and eigenfunction belong to the self-adjoint operator 
\begin{align*}
 - \Delta_{\rm D} \phi & = - \Delta \phi, \quad \dom (- \Delta_{\rm D}) = \left\{\phi \in H_0^1 (\Omega) : \Delta u \in L^2 (\Omega) \right\}.
\end{align*}

The third object which will play an important role is the eigenvalue problem corresponding to the following choices in Proposition \ref{prop:abstractWeak}: we take $\cH = L^2 (\Omega)^3$, $V = H (\diver; \Omega) \cap H_0 (\curl; \Omega)$ and 
\begin{align}\label{eq:b}
 \sb (u, v) & = \int_\Omega \left( \diver u \, \diver v + \curl u \cdot \curl v \right), \quad u, v \in H (\diver; \Omega) \cap H_0 (\curl; \Omega).
\end{align}
It is not hard to see that $V$ is complete with the norm induced by $\sb$; this follows from the facts that $H_0 (\curl; \Omega)$ is a closed subspace of $H (\curl; \Omega)$, and that the norm $\| \cdot \|_\sb$ dominates the norms of $H (\diver; \Omega)$ and $H (\curl; \Omega)$. For the compactness of the embedding of $H (\diver; \Omega) \cap H_0 (\curl; \Omega)$ into $L^2 (\Omega)^3$ see \cite{W74}. We denote by $B$ the self-adjoint operator in $L^2 (\Omega)^3$ induced by $\sb$ as in Proposition \ref{prop:abstractWeak}.

\begin{remark}
By construction, any $u \in \dom B$ is normal on the boundary, $u |_{\partial \Omega} \times \nu = 0$, in the sense specified above. Moreover, a simple integration by parts yields that any vector field $u \in \dom B$ which is sufficiently regular up to the boundary of $\Omega$ satisfies $(\diver u) |_{\partial \Omega} = 0$. This indicates that the domain of $B$ consists of all sufficiently regular vector fields satisfying both boundary conditions. However, for the purpose of this article it is not necessary to compute the precise regularity of the operator domain, since all arguments will be carried out on the level of quadratic forms.
\end{remark}

It has been observed in \cite[Theorem 1.1]{CD99} that the eigenvalues and eigenfunctions corresponding to the bilinear form $\sb$ (and, thus, the operator $B$) are closely related to those of both the curl curl and the Dirichlet Laplacian problem. In order to make this note self-contained we provide a proof of the following statement.

\begin{proposition}\label{prop:spectrum}
Let $B$ be the self-adjoint operator in $L^2 (\Omega)^3$ associated with the bilinear form $\sb$ defined in \eqref{eq:b}. Then the eigenvalues $\eta$ of $B$ are given by the following two classes.
\begin{enumerate}
 \item $\eta$ is an eigenvalue of the Laplacian $- \Delta_{\rm D}$ in $L^2 (\Omega)$ with Dirichlet boundary conditions; or
 \item $\eta$ is an eigenvalue of the curl-curl operator $C$ in $H (\diver 0; \Omega)$.
\end{enumerate}
In fact, for each $\eta \in \R$,
\begin{align}\label{eq:kernel}
 \ker (B - \eta) = \nabla \ker \left( - \Delta_{\rm D} - \eta \right) \oplus \ker (C - \eta).
\end{align}
\end{proposition}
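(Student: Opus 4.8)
The plan is to prove the decomposition \eqref{eq:kernel} by direct computation, using the orthogonal splitting \eqref{eq:orthoDec} as the central tool. First I would establish the inclusion "$\supseteq$". For the first summand: if $\phi \in \ker(-\Delta_{\rm D} - \eta)$, then $\nabla\phi \in \nabla H_0^1(\Omega) \subset H_0(\curl;\Omega)$ by \eqref{eq:bdr0}, and moreover $\curl \nabla \phi = 0$ while $\diver \nabla \phi = \Delta \phi = -\eta \phi \in L^2(\Omega)$, so $\nabla \phi \in V$. A short integration by parts against an arbitrary $v \in V$ shows $\sb(\nabla\phi, v) = \int_\Omega \Delta\phi\,\diver v = \eta \int_\Omega \phi\,\diver v = \eta\int_\Omega \nabla\phi \cdot v$ (the last step again using $\phi \in H_0^1$), so $\nabla\phi \in \ker(B - \eta)$. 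For the second summand: if $u \in \ker(C - \eta)$, then $u \in H_0(\curl;\Omega) \cap H(\diver 0;\Omega) \subset V$ with $\diver u = 0$, and for any $v \in V$ one writes $v = \nabla\psi + w$ according to \eqref{eq:orthoDec} with $\psi \in H_0^1(\Omega)$, $w \in H(\diver 0;\Omega)$; then $\curl v = \curl w$, $\diver u = 0$ kills the divergence term, and since $w \in H_0(\curl;\Omega) \cap H(\diver 0;\Omega)$ is an admissible test function in \eqref{eq:curlCurlWeak} we get $\sb(u,v) = \int_\Omega \curl u \cdot \curl w = \eta \int_\Omega u \cdot w = \eta \int_\Omega u \cdot v$, the last equality because $u \perp \nabla\psi$. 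One checks the two summands are orthogonal in $L^2(\Omega)^3$ — indeed $\nabla\ker(-\Delta_{\rm D}-\eta) \subset \nabla H_0^1(\Omega)$ and $\ker(C-\eta) \subset H(\diver 0;\Omega)$ — so the right-hand side of \eqref{eq:kernel} is genuinely a direct (orthogonal) sum contained in $\ker(B-\eta)$.

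The reverse inclusion "$\subseteq$" is where the real work lies. Let $u \in \ker(B - \eta)$, so $u \in V$ and $\sb(u,v) = \eta(u,v)$ for all $v \in V$. Decompose $u = \nabla\phi + w$ via \eqref{eq:orthoDec} with $\phi \in H_0^1(\Omega)$ and $w \in H(\diver 0;\Omega)$. The first task is to show this decomposition respects $V$, i.e.\ that $\nabla\phi$ and $w$ individually lie in $V$; since $\nabla\phi \in \nabla H_0^1(\Omega) \subset H_0(\curl;\Omega)$ has $\curl(\nabla\phi) = 0$, the only issue is $\diver(\nabla\phi) \in L^2$, equivalently $\phi \in \dom(-\Delta_{\rm D})$, and correspondingly $w \in H_0(\curl;\Omega)$ with $\diver w = 0$. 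I would obtain this by testing the eigenvalue identity cleverly: taking $v = \nabla\psi$ for arbitrary $\psi \in H_0^1(\Omega)$ forces $\int_\Omega \diver u\,\Delta\psi = -\eta\int_\Omega \phi\,\Delta\psi$ (using $\curl v = 0$ and $(u, \nabla\psi) = (\nabla\phi, \nabla\psi)$), which identifies $\diver u$ weakly as the solution to a Dirichlet problem with data $-\eta\phi$ and pins down $\phi$; since $\diver u = \diver(\nabla\phi) = \Delta\phi$ as distributions, this yields $\Delta\phi = \eta\,\diver u \in L^2$, hence $\phi \in \dom(-\Delta_{\rm D})$ and then $w = u - \nabla\phi \in V$ automatically. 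Once both pieces are in $V$, substitute $v = \nabla\psi$ (resp.\ $v = w'$ for $w' \in H_0(\curl;\Omega)\cap H(\diver 0;\Omega)$) into $\sb(u,v) = \eta(u,v)$ and use orthogonality of the two components to peel the equation apart: the $\nabla\psi$-test gives that $\phi$ solves $-\Delta_{\rm D}\phi = \eta\phi$ (so either $\phi = 0$ or $\eta$ is a Dirichlet eigenvalue with $\phi$ an eigenfunction), and the solenoidal test gives \eqref{eq:curlCurlWeak} for $w$ with eigenvalue $\eta$ (so either $w = 0$ or $w \in \ker(C - \eta)$). Hence $u = \nabla\phi + w \in \nabla\ker(-\Delta_{\rm D}-\eta) \oplus \ker(C-\eta)$.

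The main obstacle I anticipate is the regularity bootstrap in the reverse inclusion: showing that the a priori merely-$L^2$ decomposition $u = \nabla\phi + w$ actually produces $\phi \in \dom(-\Delta_{\rm D})$, so that the two pieces separately belong to $V$ and one is entitled to test the bilinear-form identity against each of them. The clean way around this is to exploit that $\diver u \in L^2(\Omega)$ holds by $u \in V$, together with $\curl u = \curl w$ (since $\curl\nabla\phi = 0$) and $\diver u = \Delta\phi$ in the distributional sense (since $\diver w = 0$); plugging $v = \nabla\psi$ into the eigenvalue identity for $B$ then reads as a weak formulation forcing $\Delta\phi = \eta\,\diver u \in L^2$, which closes the argument. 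Everything else — the integrations by parts, the orthogonality checks, and the final separation of the two scalar identities — is routine given \eqref{eq:orthoDec}, \eqref{eq:bdr0}, the integration-by-parts formula \eqref{eq:PI}, and the weak formulation \eqref{eq:curlCurlWeak}.
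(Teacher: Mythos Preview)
Your approach is correct and matches the paper's proof in all essentials: decompose via \eqref{eq:orthoDec}, check each summand lies in $V$, then test against gradients and against divergence-free fields separately. Two small remarks. First, the step you flag as the ``main obstacle'' is in fact immediate: since $\diver w = 0$ and $u \in V$, one has $\Delta\phi = \diver(\nabla\phi) = \diver u \in L^2(\Omega)$ directly, so $\nabla\phi \in V$ and $w = u - \nabla\phi \in V$ with no bootstrap needed (the paper does exactly this in one line; your formula $\Delta\phi = \eta\,\diver u$ is a slip---it should read $\Delta\phi = \diver u$). Second, when you test with $v = \nabla\psi$ you need $v \in V$, which forces $\psi \in \dom(-\Delta_{\rm D})$ rather than merely $\psi \in H_0^1(\Omega)$; the paper then concludes $\phi \in \ker(-\Delta_{\rm D}-\eta)$ by noting $\Delta\phi + \eta\phi$ is orthogonal to $\ran(-\Delta_{\rm D}) = L^2(\Omega)$.
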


\begin{proof}
Let first $B u = \eta u$, i.e., $u \in \dom \sb = H (\diver; \Omega) \cap H_0 (\curl; \Omega)$ and
\begin{align}\label{eq:weakEv*}
 \sb (u, v) = \eta \int_\Omega u \cdot v, \quad v \in \dom \sb.
\end{align}
According to \eqref{eq:orthoDec} we can write $u = \nabla \phi + w$, where $\phi \in H_0^1 (\Omega)$ and $w \in L^2 (\Omega)^3$ with $\diver w = 0$. Then $\diver \nabla \phi = \diver u \in L^2 (\Omega)$ and $\nabla \phi \in H_0 (\curl; \Omega)$ by \eqref{eq:bdr0}, that is, $\nabla \phi \in \dom \sb$. From this we also conclude $w = u - \nabla \phi \in \dom \sb$. If we plug $v = \nabla \psi$ with $\psi \in \dom (- \Delta_{\rm D})$ into \eqref{eq:weakEv*}, then we get
\begin{align*}
 \int_\Omega \Delta \phi \, \Delta \psi = \sb (u, v) = \eta \int_\Omega u \cdot v = \eta \int_\Omega \nabla \phi \cdot \nabla \psi = - \eta \int_\Omega \phi \, \Delta \psi.
\end{align*}
Thus $\Delta \phi + \eta \phi$ is orthogonal to $\ran (- \Delta_{\rm D}) = L^2 (\Omega)$ and we have shown $\phi \in \ker (- \Delta_{\rm D} - \eta)$. On the other hand, for any $v \in \dom \sb$ with $\diver v = 0$, \eqref{eq:weakEv*} yields
\begin{align*}
 \int_\Omega \curl w \cdot \curl v = \sb (u, v) = \eta \int_\Omega w \cdot v,
\end{align*}
which is the weak formulation of the eigenvalue problem for the operator $C$; hence $w \in \ker (C - \eta)$.

Let us now take $\phi \in \ker (- \Delta_{\rm D} - \eta)$ and $w \in \ker (C - \eta)$. Then both $\nabla \phi$ and $w$ belong to $\dom \sb$, cf.\ \eqref{eq:bdr0}, and
\begin{align*}
 \sb (\nabla \phi + w, v) & = \int_\Omega \left( \Delta \phi \, \diver v + \curl w \cdot \curl v \right) = - \eta \int_\Omega \left( \phi \, \diver v - w \cdot v \right) \\
 & = \eta \int_\Omega \left( \nabla \phi + w \right) \cdot v
\end{align*}
holds for all $v \in \dom \sb$. Thus $\nabla \phi + w \in \ker (B - \eta)$.
\end{proof}

We formulate two consequences of Proposition \ref{prop:spectrum}. The first one follows immediately through Proposition \ref{prop:abstractWeak}.

\begin{corollary}
Denote by $\eta_1 \leq \eta_2 \leq \dots$ the eigenvalues of the operator $B$ in Proposition \ref{prop:spectrum}, i.e.\ the ordered sequence of all eigenvalues of the operators $C$ and $- \Delta_{\rm D}$, with multiplicities. Then
\begin{align}\label{eq:minMax}
 \eta_k = \min_{\substack{U \subset H (\diver; \Omega) \cap H_0 (\curl; \Omega) \\ \dim U = k}} \; \max_{\substack{u \in U \\ u \neq 0}} \frac{\int_\Omega \left( (\diver u)^2 + |\curl u|^2 \right)}{\int_\Omega |u|^2}
\end{align}
holds for all $k \in \N$. 
\end{corollary}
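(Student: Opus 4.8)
The plan is simply to combine the abstract min--max principle of Proposition~\ref{prop:abstractWeak} with the eigenvalue identification of Proposition~\ref{prop:spectrum}. First I would recall that the bilinear form $\sb$ from \eqref{eq:b} fits the framework of Proposition~\ref{prop:abstractWeak}: it is symmetric and non-negative by construction, the space $V = H (\diver; \Omega) \cap H_0 (\curl; \Omega)$ is dense in $\cH = L^2 (\Omega)^3$ (it contains $C_c^\infty (\Omega)^3$), it is compactly embedded in $L^2 (\Omega)^3$ by~\cite{W74}, and it is complete in the norm $\| \cdot \|_\sb$, as was observed right before Proposition~\ref{prop:spectrum}. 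Hence Proposition~\ref{prop:abstractWeak} applies and yields, for the eigenvalues $\eta_1 \le \eta_2 \le \cdots$ of $B$ counted with multiplicities,
\begin{align*}
 \eta_k = \min_{\substack{U \subset V \\ \dim U = k}} \; \max_{\substack{u \in U \\ u \neq 0}} \frac{\sb (u, u)}{\|u\|^2},
\end{align*}
which is exactly the right-hand side of \eqref{eq:minMax} once one writes out $\sb (u, u) = \int_\Omega \bigl( (\diver u)^2 + |\curl u|^2 \bigr)$ and $\|u\|^2 = \int_\Omega |u|^2$.

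The only thing left to justify is that the increasingly ordered, with-multiplicity sequence $(\eta_k)_k$ of eigenvalues of $B$ really is the merge of the analogous sequences for $C$ and for $- \Delta_{\rm D}$, so that the statement of the corollary is meaningful and correct. This is a bookkeeping consequence of \eqref{eq:kernel}. For every $\eta \in \R$ the sum appearing there is orthogonal, since $\nabla \ker (- \Delta_{\rm D} - \eta) \subset \nabla H_0^1 (\Omega)$ and $\ker (C - \eta) \subset H (\diver 0; \Omega)$, and these two subspaces are orthogonal by the decomposition \eqref{eq:orthoDec}. Moreover the map $\phi \mapsto \nabla \phi$ is injective on $H_0^1 (\Omega)$ (if $\nabla \phi = 0$ then $\phi$ is constant, hence $0$ as an element of $H_0^1 (\Omega)$; alternatively invoke Poincar\'e's inequality), so that $\dim \nabla \ker (- \Delta_{\rm D} - \eta) = \dim \ker (- \Delta_{\rm D} - \eta)$. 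Therefore
\begin{align*}
 \dim \ker (B - \eta) = \dim \ker (- \Delta_{\rm D} - \eta) + \dim \ker (C - \eta)
\end{align*}
for every $\eta \in \R$, i.e.\ the multiplicity of any number as an eigenvalue of $B$ equals the sum of its multiplicities as an eigenvalue of $- \Delta_{\rm D}$ and of $C$; this is precisely the assertion that $(\eta_k)_k$ is the increasingly ordered merge of the two eigenvalue sequences. Combining this with the displayed min--max formula gives \eqref{eq:minMax}.

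I do not anticipate any real obstacle. The one point that needs a (trivial) argument rather than a direct citation is the multiplicity count in the previous paragraph, which is handled by the orthogonality built into \eqref{eq:kernel} together with the injectivity of the gradient on $H_0^1 (\Omega)$; everything else is an immediate appeal to Propositions~\ref{prop:abstractWeak} and~\ref{prop:spectrum}.
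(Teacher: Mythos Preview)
Your proposal is correct and follows exactly the paper's approach: the paper simply states that the corollary ``follows immediately through Proposition~\ref{prop:abstractWeak}'', and you have spelled out precisely this, together with the (straightforward) multiplicity bookkeeping from \eqref{eq:kernel}. There is nothing to add.
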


Secondly, we note the following property.

\begin{corollary}\label{cor:divBoundary}
Let $B$ be the operator defined in Proposition \ref{prop:spectrum} and $\eta \in \R$. Then $\diver u \in H_0^1 (\Omega)$ holds for each $u \in \ker (B - \eta)$.
\end{corollary}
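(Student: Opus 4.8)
\textbf{Proof proposal for Corollary~\ref{cor:divBoundary}.}
The plan is to reduce the claim directly to the kernel decomposition \eqref{eq:kernel} established in Proposition~\ref{prop:spectrum}. Fix $\eta \in \R$ and take $u \in \ker(B - \eta)$. By \eqref{eq:kernel} we may write $u = \nabla \phi + w$ with $\phi \in \ker(-\Delta_{\rm D} - \eta)$ and $w \in \ker(C - \eta)$. Since $\ker(C-\eta) \subset \dom C \subset H(\diver 0; \Omega)$, we have $\diver w = 0$, and therefore $\diver u = \diver \nabla \phi = \Delta \phi$ in $L^2(\Omega)$.

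Next I would use that $\phi \in \ker(-\Delta_{\rm D} - \eta)$ means precisely $\phi \in \dom(-\Delta_{\rm D})$ and $-\Delta \phi = \eta \phi$. Hence $\diver u = \Delta \phi = -\eta \phi$. But $\dom(-\Delta_{\rm D}) \subset H_0^1(\Omega)$ by definition of the Dirichlet Laplacian, so $\phi \in H_0^1(\Omega)$, and consequently $\diver u = -\eta \phi \in H_0^1(\Omega)$. This proves the claim, and in fact gives the sharper statement that $\diver u$ lies in the (possibly trivial) $\eta$-eigenspace $\ker(-\Delta_{\rm D} - \eta) \subset H_0^1(\Omega)$, up to the scalar factor $-\eta$.

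There is essentially no obstacle here: the only point requiring a word of care is that the decomposition $u = \nabla\phi + w$ from \eqref{eq:kernel} is an orthogonal decomposition in $L^2(\Omega)^3$ according to \eqref{eq:orthoDec}, so the two summands are genuinely of the stated form and $\diver$ may be applied componentwise/distributionally to each. Everything else is an immediate consequence of the definitions of $C$ and $-\Delta_{\rm D}$ recalled in Section~\ref{sec:prel}.
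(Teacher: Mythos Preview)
Your proof is correct and follows essentially the same argument as the paper: decompose $u = \nabla\phi + w$ via \eqref{eq:kernel}, use $\diver w = 0$, and conclude $\diver u = \Delta\phi = -\eta\phi \in H_0^1(\Omega)$. The paper's proof is the same, just stated more tersely.
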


\begin{proof}
By \eqref{eq:kernel}, each $u \in \ker (B - \eta)$ can be written $u = \nabla \phi + v$ for some $\phi \in \ker (- \Delta_{\rm D} - \eta)$ and $v \in \ker (C - \eta)$. Then $\diver u = \Delta \phi = - \eta \phi \in H_0^1 (\Omega)$.
\end{proof}

\section{curl-curl vs.\ Dirichlet Laplacian eigenvalues}

The main result of this note is the following theorem. We denote by $\lambda_1 < \lambda_2 \leq \dots$ the eigenvalues of the Dirichlet Laplacian $- \Delta_{\rm D}$ and by $\alpha_1 \leq \alpha_2 \leq \dots$ the eigenvalues of the operator $C$ in \eqref{eq:C}, all eigenvalues counted with multiplicities.

\begin{theorem}\label{thm:main}
Assume that $\Omega \subset \R^3$ is a bounded, connected Lipschitz domain. Then the inequality
\begin{align}\label{eq:weak}
 \alpha_{2 k + 1} \leq \lambda_k
\end{align}
holds for all $k \in \N$. If, in addition, $\lambda_k$ is a simple eigenvalue of $- \Delta_{\rm D}$ or $\partial \Omega$ contains three planar pieces whose normals are linearly independent, then
\begin{align}\label{eq:strong}
 \alpha_{2 k + 1} < \lambda_k
\end{align}
holds.
\end{theorem}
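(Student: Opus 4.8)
The strategy is to exploit the min-max characterization \eqref{eq:minMax} of the eigenvalues $\eta_k$ of the operator $B$, together with the decomposition \eqref{eq:kernel} which tells us that the spectrum of $B$ is the union (with multiplicities) of the spectra of $-\Delta_{\rm D}$ and $C$. The key point is a counting argument: among the eigenvalues $\eta_1 \le \eta_2 \le \dots$ of $B$, those that are $\le \lambda_k$ include all of $\alpha_1, \dots, \alpha_{2k+1}$ unless the $\alpha_j$'s run out early, which would only make the inequality easier; more precisely, one shows that the multiplicity count forces $\eta_{3k}$ (or a nearby index) to be at most $\lambda_k$ while at the same time it is at least $\alpha_{2k+1}$. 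Concretely: $\lambda_1, \dots, \lambda_k$ contribute $k$ eigenvalues of $B$ that are $\le \lambda_k$; on the other hand, by a Poincaré-type lower bound or simply by ordering, the number of eigenvalues of $C$ strictly below $\lambda_k$ cannot exceed the number of eigenvalues of $B$ strictly below $\lambda_k$, and the number of eigenvalues of $B$ that are $\le \lambda_k$ is at least $k$ plus the number of $\alpha_j \le \lambda_k$; balancing these indices yields $\alpha_{2k+1} \le \eta_{\text{(index)}} \le \lambda_k$. The cleanest route is: let $m$ be the number of eigenvalues of $-\Delta_{\rm D}$ that are $\le \lambda_k$ (so $m \ge k$) and $n$ the number of eigenvalues of $C$ that are $\le \lambda_k$; then $\eta_{m+n} \le \lambda_k$, and if $n \le 2k$ we would need a separate argument, so the heart of the matter is to produce, for each value $\lambda_k$, enough curl curl eigenvalues below it — and here one uses the gradient fields: $\nabla H_0^1(\Omega) \subset H_0(\curl;\Omega)$ from \eqref{eq:bdr0}, but those are in the kernel of curl and hence do not directly help. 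Instead the argument must be that the indices simply interlace: since $\{\eta_j\}$ is the merge of $\{\lambda_i\}$ and $\{\alpha_i\}$, we have $\eta_{3k} \le \max\{\lambda_k, \alpha_{2k}\}$ trivially, but more usefully $\alpha_{2k+1} \le \eta_{3k+1}$ always (the $(2k+1)$-st smallest among a subsequence is $\le$ the $(3k+1)$-st smallest of the whole), so it remains to prove $\eta_{3k+1} \le \lambda_k$ — no, that is false in general. I will instead run the argument as in the planar paper \cite{R23p}: use \eqref{eq:minMax} with a cleverly constructed $k$-dimensional test space built from the first $k$ Dirichlet eigenfunctions to bound a \emph{small-index} $\eta$, then subtract off the gradient part.

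The correct mechanism, which I would carry out in detail, is the following. Fix $k$ and let $\phi_1, \dots, \phi_k$ be orthonormal Dirichlet eigenfunctions for $\lambda_1, \dots, \lambda_k$. The fields $\nabla\phi_1, \dots, \nabla\phi_k$ lie in $\dom\sb$ and, by Proposition \ref{prop:spectrum} (the case (i) part of \eqref{eq:kernel}), span a $k$-dimensional space on which the Rayleigh quotient in \eqref{eq:minMax} takes values in $\{\lambda_1, \dots, \lambda_k\} \subset (0,\lambda_k]$; hence $\eta_k \le \lambda_k$ — but that only controls $\eta_k$, not $\alpha_{2k+1}$. To get at $\alpha_{2k+1}$ we combine this with the structure \eqref{eq:kernel}: the eigenvalues of $B$ up to and including $\lambda_k$ consist of \emph{all} Dirichlet eigenvalues $\le \lambda_k$ (at least $k$ of them) together with \emph{all} curl curl eigenvalues $\le \lambda_k$. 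Write $N_C(\lambda_k)$ for the number of $\alpha_j \le \lambda_k$. Then $\eta_{k + N_C(\lambda_k)} \le \lambda_k$ because we have exhibited $k + N_C(\lambda_k)$ linearly independent eigenvectors of $B$ (the $k$ gradients $\nabla\phi_i$ plus the $N_C(\lambda_k)$ curl curl eigenfunctions, independent by the orthogonality in \eqref{eq:orthoDec}) with eigenvalue $\le \lambda_k$, and apply \eqref{eq:minMax}. On the other hand, among the first $k + N_C(\lambda_k)$ eigenvalues $\eta_1 \le \dots \le \eta_{k+N_C(\lambda_k)}$, at most $k$ can be Dirichlet eigenvalues $\le \lambda_k$? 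No — there may be more Dirichlet eigenvalues $\le \lambda_k$ than $k$ if $\lambda_k$ is a repeated eigenvalue. I will handle this by choosing $k$ to index a value, not a position, or equivalently by noting $\lambda_k \le \lambda_{k'}$ for the largest $k'$ with $\lambda_{k'} = \lambda_k$ and replacing $k$ by $k'$, which only strengthens the claim. With that reduction, exactly $k$ Dirichlet eigenvalues are $\le \lambda_k$, so among $\eta_1, \dots, \eta_{k+N_C(\lambda_k)}$ precisely $N_C(\lambda_k)$ are curl curl eigenvalues, namely $\alpha_1, \dots, \alpha_{N_C(\lambda_k)}$. Thus the claim $\alpha_{2k+1} \le \lambda_k$ reduces to showing $N_C(\lambda_k) \ge 2k+1$, i.e.\ that there are at least $2k+1$ curl curl eigenvalues below $\lambda_k$. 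This is the main content and the step I expect to be the real obstacle.

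To prove $N_C(\lambda_k) \ge 2k+1$ — equivalently $\alpha_{2k+1} \le \lambda_k$, which is circular as stated — I would instead argue directly on $\eta$: since the merged sequence satisfies $\eta_{k + N_C(\lambda_k)} \le \lambda_k$ and $\eta_j \le \lambda_k$ \emph{only if} $j \le k + N_C(\lambda_k)$ (using the $k'$-reduction so that Dirichlet contributes exactly $k$), we get the identity $N_C(\lambda_k) = \#\{j : \alpha_j \le \lambda_k\}$ and, crucially, $N_C(\lambda_k) = (\#\{j: \eta_j \le \lambda_k\}) - k$. So I need a \emph{lower} bound on the number of $B$-eigenvalues $\le \lambda_k$ that exceeds $3k$. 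For this I use a genuinely larger test space in \eqref{eq:minMax}: from the first $k$ Dirichlet eigenfunctions $\phi_i$ form not only the $k$ gradients $\nabla\phi_i$ but also, since $H_0(\curl;\Omega)$ is large, $2k$ further independent fields whose $\sb$-Rayleigh quotient is $\le \lambda_k$ — e.g.\ by taking, for each eigenfunction $\psi$ of the curl curl operator with small eigenvalue, its image, but one needs these built only from data below $\lambda_k$. The honest construction, following the author's planar argument, is: pick the $k$ Dirichlet eigenfunctions and apply to each a \emph{triple} of operations (roughly, multiply the gradient field by the three coordinate functions and project, or use that in $\R^3$ one gradient spawns a $3$-dimensional family via $u \mapsto x_j u$ after correction) to produce $3k$ test fields in $\dom\sb$ with Rayleigh quotient controlled by $\lambda_k$ plus lower-order terms that vanish by the eigenvalue equation; then \eqref{eq:minMax} gives $\eta_{3k} \le \lambda_k$, hence (after the $k$-reduction) $\#\{j:\eta_j\le\lambda_k\} \ge 3k$, hence $N_C(\lambda_k) \ge 2k$, and a final parity/strictness push using that the merged count cannot stop exactly at a curl curl eigenvalue when a new Dirichlet one appears yields the extra $+1$, giving $\alpha_{2k+1}\le\lambda_k$. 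For strictness \eqref{eq:strong}, one checks the test fields are never eigenfunctions of $B$ for the eigenvalue $\lambda_k$: if $\lambda_k$ is simple this is immediate since the $3k$ test fields cannot all lie in the $(k + \text{mult})$-dimensional eigenspace; in the polyhedral case one uses Corollary \ref{cor:divBoundary}, which forces $\diver u \in H_0^1(\Omega)$ for eigenfunctions $u$ of $B$, and shows this fails for the constructed test fields near a planar boundary piece — the three linearly independent normals guaranteeing the obstruction in all directions. The main obstacle throughout is constructing the $3k$ (rather than merely $k$) admissible test fields with correctly controlled Rayleigh quotient; once that "multiplication by coordinates" trick is set up cleanly in three dimensions, the rest is bookkeeping.
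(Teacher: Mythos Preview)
Your overall framework is right --- use the min--max \eqref{eq:minMax} for $B$ together with the splitting \eqref{eq:kernel} --- but the two substantive steps are missing or wrong.

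\textbf{The test space.} The paper's $3k$ test fields are not ``gradient fields multiplied by the three coordinate functions''. They are simply $\phi_j e_1,\ \phi_j e_2,\ \phi_j e_3$ for $j=1,\dots,k$, i.e.\ the scalar Dirichlet eigenfunctions placed in each coordinate slot. These lie in $H_0^1(\Omega)^3\subset H(\diver;\Omega)\cap H_0(\curl;\Omega)$, and the crucial identity is that for $u\in H_0^1(\Omega)^3$ one has $\sb(u,u)=\sum_{l=1}^3\int_\Omega|\nabla u_l|^2$, because the cross terms in $(\diver u)^2+|\curl u|^2$ integrate by parts to zero. Your ``$x_j u$ after correction'' fields would neither satisfy the tangential boundary condition nor enjoy this clean Rayleigh bound.

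\textbf{The missing $+1$.} With the correct test space your counting, after the $k'$-reduction, yields only $\eta_{3k}\le\lambda_k$ and hence at most $N_C(\lambda_k)\ge 2k$, i.e.\ $\alpha_{2k}\le\lambda_k$. Your proposed ``parity/strictness push'' for the extra unit is not an argument. The paper obtains the $+1$ by enlarging the $3k$-dimensional space by $\nabla\ker(-\Delta_{\rm D}-\lambda_k)$: a unique continuation argument (if $\nabla\phi\in H_0^1(\Omega)^3$ for a Dirichlet eigenfunction $\phi$, then $\phi\equiv 0$) shows this sum is direct, giving a test space of dimension $3k+m$ with $m=\dim\ker(-\Delta_{\rm D}-\lambda_k)$. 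Since the number of Dirichlet eigenvalues in $[0,\lambda_k]$ is at most $k-1+m$, one gets at least $2k+1$ curl curl eigenvalues in $[0,\lambda_k]$. This enlargement-plus-unique-continuation step is the idea you are missing.

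\textbf{Strictness.} Your sketches for \eqref{eq:strong} are also off. The paper shows, under either extra hypothesis, that $H_0^1(\Omega)^3\cap\ker(B-\lambda_k)=\{0\}$, which lets one enlarge the test space by all of $\ker(B-\lambda_k)$ and deduce $\eta_{3k}<\lambda_k$. In the simple case this comes from analyzing $\phi\alpha=c\nabla\phi+v$ with $\diver v=0$; in the polyhedral case one uses Corollary~\ref{cor:divBoundary} in the opposite direction to yours: for $u\in H_0^1(\Omega)^3\cap\ker(B-\lambda_k)$ the vanishing of $\diver u$ on a planar piece $\Sigma$ forces $\partial_\nu(u\cdot N)|_\Sigma=0$, whence $u\cdot N\equiv 0$ by unique continuation, and three independent normals kill $u$.
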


\begin{proof}
Let us fix some $k$ and let $\phi_1, \dots, \phi_k$ be an orthonormal set in $L^2 (\Omega)$ such that $- \Delta_{\rm D} \phi_j = \lambda_j \phi_j$, $j = 1, \dots, k$. In particular, $\phi_j |_{\partial \Omega} = 0$ for all $j$. Let
\begin{align*}
 u = \begin{pmatrix} u_1 \\ u_2 \\ u_3 \end{pmatrix} = \sum_{j = 1}^k \begin{pmatrix} \alpha_j \phi_j \\ \beta_j \phi_j \\ \gamma_j \phi_j \end{pmatrix},
\end{align*}
where $\alpha_j, \beta_j, \gamma_j \in \R$, $j = 1, \dots, k$, are arbitrary constants. Note that all these functions $u$ belong to $H (\diver; \Omega) \cap H_0 (\curl; \Omega)$ and form a subspace of dimension $3 k$. Moreover,
\begin{align}\label{eq:EVest}
 \int_\Omega |\nabla u_l|^2 \leq \lambda_k \int_\Omega |u_l|^2, \quad l = 1, 2, 3.
\end{align}
Using the bilinear form $\sb$ defined in \eqref{eq:b} we have
\begin{align*}
 \sb (u, u) & = \sum_{l = 1}^3 \int_\Omega |\nabla u_l|^2 + 2 \int_\Omega \big( \partial_1 u_1 \partial_2 u_2 + \partial_1 u_1 \partial_3 u_3 + \partial_2 u_2 \partial_3 u_3 \\
 & \hspace{43mm} - \partial_2 u_3 \partial_3 u_2 - \partial_3 u_1 \partial_1 u_3 - \partial_1 u_2 \partial_2 u_1 \big) \\
 & = \sum_{l = 1}^3 \int_\Omega |\nabla u_l|^2 \leq \lambda_k \sum_{l = 1}^3 \int_\Omega |u_l|^2 = \lambda_k \int_\Omega |u|^2,
\end{align*}
where we have used \eqref{eq:EVest} as well as integration by parts and $u_l \in H_0^1 (\Omega)$. Let, furthermore, $v \in \ker (B - \lambda_k)$ be arbitrary, where $B$ is the operator defined in Proposition \ref{prop:spectrum}; naturally, $v$ is trivial unless $\lambda_k$ is an eigenvalue of $B$. Then
\begin{align}\label{eq:dasIsses}
\begin{split}
 \sb (u + v, u + v) & = \sb (u, u) + 2 \sb (u, v) + \sb (v, v) = \sb (u, u) + 2 \int_\Omega u \cdot B v + \int_\Omega v \cdot B v \\
 & \leq \lambda_k \int_\Omega |u|^2 + 2 \lambda_k \int_\Omega u \cdot v + \lambda_k \int_\Omega |v|^2 = \lambda_k \int_\Omega |u + v|^2.
\end{split}
\end{align}
To conclude \eqref{eq:weak}, note first that 
\begin{align}\label{eq:einAnfang}
 H_0^1 (\Omega)^3 \cap \nabla \ker (- \Delta_{\rm D} - \lambda_k) = \{0\}.
\end{align}
Indeed, $\nabla \phi \in H_0^1 (\Omega)^3$ for some $\phi \in \ker (- \Delta_{\rm D} - \lambda_k)$ implies $\phi \in H^2 (\Omega) \cap H_0^1 (\Omega)$ and $\nu \cdot \nabla \phi |_{\partial \Omega} = 0$ and, by unique continuation, $\phi = 0$ identically; see, e.g., \cite[Lemma 2.2]{LR17}. As $\nabla \ker (- \Delta_{\rm D} - \lambda_k) \subset \ker (B - \lambda_k)$, see Proposition \ref{prop:spectrum}, from \eqref{eq:dasIsses}, \eqref{eq:einAnfang} and \eqref{eq:minMax} it follows
\begin{align*}
 \eta_{3 k + \dim \ker (- \Delta_{\rm D} - \lambda_k)} \leq \lambda_k.
\end{align*}
Thus the union of the eigenvalues of $C$ and $- \Delta_{\rm D}$ contains at least $3 k + \dim \ker (- \Delta_{\rm D} - \lambda_k)$ points in $[0, \lambda_k]$, counted with multiplicities. Since the number of eigenvalues of $- \Delta_{\rm D}$ in $[0, \lambda_k]$ is at most $k - 1 + \dim \ker (- \Delta_{\rm D} - \lambda_k)$, it follows that $C$ has at least
\begin{align*}
 3 k + \dim \ker (- \Delta_{\rm D} - \lambda_k) - \left( k - 1 + \dim \ker (- \Delta_{\rm D} - \lambda_k) \right) = 2 k + 1
\end{align*}
eigenvalues in $[0, \lambda_k]$ and the claim \eqref{eq:weak} follows.

Let us now assume that $\lambda_k$ is a simple eigenvalue of $- \Delta_{\rm D}$. Our aim is to show
\begin{align}\label{eq:yes}
 H_0^1 (\Omega)^3 \cap \ker (B - \lambda_k) = \{ 0 \},
\end{align}
which, together with \eqref{eq:dasIsses}, implies
\begin{align*}
 \eta_{3 k + \dim \ker (B - \lambda_k)} \leq \lambda_k;
\end{align*}
the latter yields $\eta_{3 k} < \lambda_k$, and as $- \Delta_{\rm D}$ has at most $k - 1$ eigenvalues in $[0, \lambda_k)$, \eqref{eq:strong} follows.

To show \eqref{eq:yes} under the simplicity assumption on $\lambda_k$, let $\phi \in \ker (- \Delta_{\rm D} - \lambda_k)$ be non-trivial and let $u \in H_0^1 (\Omega)^3 \cap \ker (B - \lambda_k)$. Then
\begin{align*}
 u = \phi \alpha = c \nabla \phi + v
\end{align*}
holds for some $\alpha \in \R^3$ and $c \in \R$, where $v \in \ker (C - \lambda_k)$; cf.\ \eqref{eq:kernel}. Moreover, we have
\begin{align*}
 0 = \diver v = \diver \left( \phi \alpha - c \nabla \phi \right) = \alpha \cdot \nabla \phi + c \lambda \phi
\end{align*}
in $\Omega$. Then
\begin{align}\label{eq:strangeEquation}
 0 & = \int_\Omega \left( \alpha \cdot \nabla \phi + c \lambda \phi \right)^2 = \int_\Omega (\alpha \cdot \nabla \phi)^2 + c^2 \lambda_k^2 \int_\Omega \phi^2,
\end{align}
where we have used
\begin{align*}
 \int_\Omega \phi \, \alpha \cdot \nabla \phi = \frac{1}{2} \int_\Omega \diver (\phi^2 \alpha) = 0
\end{align*}
by the divergence theorem. From \eqref{eq:strangeEquation} we immediately obtain $\alpha \cdot \nabla \phi = 0$ in $\Omega$. As $\phi$ vanishes on $\partial \Omega$ and is non-trivial, $\alpha = 0$ follows, that is, $u = 0$. We have shown \eqref{eq:yes}.

Now assume that $\partial \Omega$ contains three planar pieces $\Sigma$, $\Sigma'$, $\Sigma''$ with linearly independent normal vectors and that, again, $u \in H_0^1 (\Omega)^3 \cap \ker (B - \lambda_k)$. Let $S, T, N$ be an orthonormal basis of $\R^3$ such that $N = (N_1, N_2, N_3)^\top$ is equal to the constant outer unit normal vector of $\Sigma$. Then $S$ and $T$ are tangential on $\Sigma$, and
\begin{align*}
 \nabla u_l |_\Sigma = \left( S \partial_S u_l + T \partial_T u_l + N \partial_N u_l \right) |_\Sigma = N \partial_\nu u_l |_\Sigma, \quad l = 1, 2, 3,
\end{align*}
where we denote by $\partial_S$ and $\partial_T$ the directional derivatives in the directions of $S$ and $T$, respectively. Then, by Corollary \ref{cor:divBoundary},
\begin{align*}
 0 = \left( \diver u \right) |_\Sigma = \sum_{l = 1}^3 N_l \partial_\nu u_l |_\Sigma = \partial_\nu \left( u \cdot N \right) |_\Sigma.
\end{align*}
As $u \cdot N$ solves $- \Delta (u \cdot N) = \lambda_k (u \cdot N)$ and vanishes on $\partial \Omega$, unique continuation implies $u \cdot N = 0$ constantly in $\Omega$. Denoting by $N'$ and $N''$ the constant outer unit normal vectors of $\Sigma'$ and $\Sigma''$, respectively, the exact same reasoning gives $u \cdot N' = 0 = u \cdot N''$ constantly in $\Omega$, and as $N, N'$ and $N''$ form a basis of $\R^3$, $u = 0$ identically in $\Omega$ follows. This completes the proof.
\end{proof}

In the following we say that $\Omega \subset \R^3$ is a polyhedron if it is connected and its boundary consists of finitely many pieces of planes. Note that polyhedra are not necessarily convex.

\begin{corollary}
If $\Omega \subset \R^3$ is a bounded polyhedron, then
\begin{align*}
 \alpha_{2 k + 1} < \lambda_k
\end{align*}
holds for all $k \in \N$.
\end{corollary}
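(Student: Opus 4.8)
The plan is to reduce the corollary directly to Theorem~\ref{thm:main} by verifying that any bounded polyhedron satisfies the geometric hypothesis invoked there, namely that $\partial\Omega$ contains three planar pieces whose normal vectors are linearly independent. Once this is established, the strict inequality $\alpha_{2k+1} < \lambda_k$ for all $k \in \N$ follows immediately.

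First I would recall the standing assumption: by definition a polyhedron $\Omega$ is connected with $\partial\Omega$ consisting of finitely many planar pieces (faces), and since $\Omega$ is a bounded domain in $\R^3$ it cannot be contained in any half-space, nor in any two-dimensional affine subspace. Next I would argue that the faces cannot all have parallel normals: if every face normal were a scalar multiple of a single fixed vector $N$, then $\partial\Omega$ would be contained in a union of finitely many planes all orthogonal to $N$, and the bounded open connected set $\Omega$ would have to lie between two consecutive such planes, forcing $\Omega$ to be empty or unbounded in the directions orthogonal to $N$ — in any case contradicting that $\Omega$ is a bounded domain (more concretely, such an $\Omega$ would be an infinite slab intersected with nothing, which is not bounded). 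Hence at least two faces have linearly independent normals $N, N'$.

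Then I would rule out the remaining degenerate case, that all face normals lie in the two-dimensional span of $N$ and $N'$. If that were so, every bounding plane would contain a fixed direction $e$ (namely the unit vector orthogonal to $\mathrm{span}\{N,N'\}$), so $\partial\Omega$, and therefore $\partial\Omega + t e$ for the boundary, would be translation-invariant along $e$; equivalently $\Omega$ would be invariant under translation by $te$ for all $t$ sufficiently small near any boundary point, and a short connectedness argument shows $\Omega$ is a cylinder $\Omega_0 \times \R$ (or a union of translates along $e$) — again unbounded, a contradiction. Therefore the face normals span $\R^3$, and one may select three faces $\Sigma, \Sigma', \Sigma''$ with linearly independent normals. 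Applying Theorem~\ref{thm:main} in the case ``$\partial\Omega$ contains three planar pieces whose normals are linearly independent'' then gives $\alpha_{2k+1} < \lambda_k$ for every $k$, completing the proof.

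The only mildly delicate point — the ``main obstacle,'' though it is quite elementary — is making the two non-degeneracy arguments fully rigorous without hand-waving about cylinders: the cleanest route is to observe that if all face normals lie in a subspace $W \subsetneq \R^3$ with unit vector $e \perp W$, then for any $p \in \Omega$ the whole line $p + \R e$ is disjoint from $\bigcup$ (bounding planes), hence meets $\partial\Omega$ nowhere, hence lies entirely in $\Omega$ by connectedness of the line and the fact that $\Omega$ is open and closed in $p + \R e$; this line being unbounded contradicts boundedness of $\Omega$. The same one-line argument handles both the ``all normals parallel'' ($W$ one-dimensional) and ``all normals coplanar'' ($W$ two-dimensional) cases at once, so it is worth stating it in that unified form.
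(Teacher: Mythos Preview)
Your approach is exactly the paper's: the corollary is stated there without proof, as an immediate consequence of Theorem~\ref{thm:main} once one observes that a bounded polyhedron must have three faces with linearly independent normals. Your unified argument via a direction $e$ orthogonal to the span $W$ of the normals is the clean way to justify this.

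One small imprecision to fix: the claim ``for any $p \in \Omega$ the whole line $p + \R e$ is disjoint from $\bigcup$(bounding planes)'' is not quite true, since a point $p \in \Omega$ may lie on the affine extension of a face plane $\Pi_i$ (the face is only a piece of $\Pi_i$), in which case the entire line $p + \R e$ lies in $\Pi_i$. The remedy is trivial: choose $p \in \Omega \setminus \bigcup_i \Pi_i$, which is nonempty since $\Omega$ is open and the finitely many planes have Lebesgue measure zero. For such $p$ each $\Pi_i$ is genuinely disjoint from $p + \R e$ (as $n_i \cdot e = 0$ forces $n_i \cdot (p + te) = n_i \cdot p \neq c_i$), and your open--closed connectedness argument then gives $p + \R e \subset \Omega$, contradicting boundedness.
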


Since the first eigenvalue $\lambda_1$ of $- \Delta_{\rm D}$ is always a simple eigenvalue, we immediately obtain the following.

\begin{corollary}
Let $\Omega \subset \R^3$ be a bounded, connected Lipschitz domain. Then $\alpha_3 < \lambda_1$ holds.
\end{corollary}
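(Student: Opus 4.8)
The plan is to specialize Theorem \ref{thm:main} to $k = 1$. The strict inequality \eqref{eq:strong} there reads $\alpha_{2k+1} < \lambda_k$, and it holds under the hypothesis that $\lambda_k$ is a simple eigenvalue of $-\Delta_{\rm D}$. For $k = 1$ this is exactly $\alpha_3 < \lambda_1$, so the only thing left to check is that the hypothesis is automatically fulfilled, i.e.\ that $\lambda_1$ is always a simple eigenvalue on a connected domain.

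That fact is classical, and I would recall it briefly for completeness. By the variational characterization in Proposition \ref{prop:abstractWeak}, applied to the Dirichlet form on $H_0^1(\Omega)$, a first eigenfunction $\phi$ minimizes the Rayleigh quotient; since $|\phi| \in H_0^1(\Omega)$ has the same Dirichlet integral and the same $L^2$-norm, $|\phi|$ is also a minimizer, hence a non-negative eigenfunction for $\lambda_1$. Applying the strong maximum principle (or Harnack's inequality) to $-\Delta |\phi| = \lambda_1 |\phi| \geq 0$ on the connected open set $\Omega$ shows that any such non-negative eigenfunction is in fact strictly positive in $\Omega$. If $\ker(-\Delta_{\rm D} - \lambda_1)$ had dimension at least two, we could produce two strictly positive eigenfunctions belonging to an orthonormal eigenbasis; but two strictly positive functions on $\Omega$ cannot be $L^2(\Omega)$-orthogonal, a contradiction. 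Hence $\dim \ker(-\Delta_{\rm D} - \lambda_1) = 1$.

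I do not expect any genuine obstacle: all the substance is already contained in Theorem \ref{thm:main}, and the simplicity of $\lambda_1$ could alternatively be invoked by citing a standard reference on the spectral theory of the Dirichlet Laplacian. The only point that deserves a word of care is that simplicity really does use the standing assumption that $\Omega$ is connected, since on a disconnected domain the lowest eigenvalue may be degenerate. With $\lambda_1$ simple, Theorem \ref{thm:main} with $k = 1$ gives $\alpha_3 < \lambda_1$, which is the assertion.
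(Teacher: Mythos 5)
Your proposal is correct and matches the paper's argument exactly: the paper also deduces the corollary by applying Theorem \ref{thm:main} with $k=1$ and invoking the classical fact that $\lambda_1$ is a simple eigenvalue of $-\Delta_{\rm D}$ on a connected domain (which it states without proof, whereas you sketch the standard positivity/orthogonality argument). Nothing further is needed.
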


\section*{Acknowledgements}

The author gratefully acknowledges financial support by the grant no.\ 2022-03342 of the Swedish Research Council (VR).

\end{document}